\documentclass{amsart}

\newtheorem{theorem}{Theorem}[section]
\newtheorem{lemma}[theorem]{Lemma}
\newtheorem{proposition}[theorem]{Proposition}
\newtheorem{corollary}[theorem]{Corollary}

\theoremstyle{definition}
\newtheorem{definition}[theorem]{Definition}
\newtheorem{example}[theorem]{Example}

\theoremstyle{remark}
\newtheorem{remark}[theorem]{Remark}
\newtheorem{acknowledgment}[theorem]{Acknowledgment}

\numberwithin{equation}{section}

\begin{document}
\title{Stampacchia's property, self-duality and orthogonality relations}
\author{Nikos Yannakakis}
\address{Department of Mathematics\\
National Technical University of Athens\\
Iroon Polytexneiou 9\\
15780 Zografou\\
Greece}
\email{nyian@math.ntua.gr}
\subjclass[2000]{Primary 46C15; Secondary 47B99, 46B03}
\keywords{Variational inequality, complemented subspace, Hilbert space characterization, self-dual Banach space, positive operator, coercive operator, orthogonality relation, cosine of a linear operator, quadratic form, evolution triple.}
\date{}
\begin{abstract}
We show that  if the conclusion of the well known Stampacchia Theorem, on variational inequalities, holds on a Banach space $X$, then $X$ is isomorphic to a Hilbert space. Motivated by this we obtain a relevant result concerning self-dual Banach spaces and investigate some connections between existing notions of orthogonality and self-duality. Moreover, we revisit the notion of the cosine of a linear operator and show that it can be used to characterize Hilbert space structure. Finally, we present some consequences of our results to quadratic forms and to evolution triples.
\end{abstract}
\maketitle
\section{Introduction}
Let $H$ be a real Hilbert space, $\|\cdot\|$ be its norm, $(\cdot\,,\cdot)$ its inner product and let $$a:H\times H\rightarrow \mathbb R$$ 
be a bounded bilinear form.

The well known Stampacchia Theorem (also called the Lions-Stampacchia Theorem, see \cite{Ernst}, \cite{Stampacchia1} and \cite{Stampacchia2}) states that if the above bilinear form is {\it coercive}, i.e. there exists $c>0$ such that
\begin{equation}
\label{coercive}
a(x,x)\geq c\| x\|^2,\text{ for all } x\in H,
\end{equation}
then for any nonempty, closed, convex subset $M$ of $H$ and $h\in H$, there exists a unique solution $x\in M$, of the variational inequality
\begin{equation}
\label{var}
a(x,z-x)\geq (h, z-x)\,,\text{ for all }z\in M.
\end{equation}

Our first aim, in this paper, is to investigate whether Stampacchia's Theorem can be generalized in the broader setting of an arbitrary Banach space $X$. As we will see, at least in its full generality, this is impossible since its conclusion implies that $X$ has to be isomorphic to a Hilbert space. 

In the sequel we obtain a relevant result concerning self-dual Banach spaces, i.e. Banach spaces that are isomorphic to their dual spaces. Along the way we see that our approach brings out some connections between existing notions of orthogonality in general normed linear spaces and self-duality. 

In the last section and motivated by the above, we revisit the cosine of a linear operator (a notion originally introduced by K. Gustafson in \cite{gus1}) and use it to obtain an additional Hilbert space characterization based on a result of J. R. Partington which can be found in \cite{Partington}. 

Finally, we present some consequences of our results to quadratic forms and to evolution triples.


\section{Stampacchia's property}
\label{section1}
Let $X$ be a real Banach space, $X^\ast$ be its dual and $\langle \cdot\,,\cdot\rangle$ be their duality product. By $M^\bot$ we denote the annihilator of a subspace $M$ of $X$, i.e.
$$M^\bot=\left\{x^\ast\in X^\ast:\,<x^\ast,x>=0\,, \text{ for all }x\in M\right\}\,.$$

To obtain the natural analogue of the conclusion of Stampacchia's Theorem in this situation we need the following definition.
\begin{definition}
Let $X$ be a real Banach space. We say that $X$ has Stampacchia's property (property (S) for short), if there exists
a bounded, bilinear form
$$a:X\times X\rightarrow\mathbb{R}$$
such that if $M$ is any nonempty, convex, closed subset of $X$ and $x^\ast\in X^\ast$, then there exists
a unique $x\in M$ such that
$$a(x,z-x)\geq \langle x^\ast,z-x\rangle\,,\text{ for all } z\in M.$$

\end{definition}
Recall that a closed subspace $M$ of a Banach space $X$ is complemented in $X$, if there exists another closed subspace $N$ of $X$ such that $X$ is their direct sum, i.e. 
$$M\cap N=\left\{\,0\right\}\text{ and }X=M+N\,.$$
Note that the existence of such a closed subspace $N$ is equivalent to the existence of a bounded linear projection from $X$ onto $M$.

Not all closed subspaces of an arbitrary Banach space are complemented. In fact we have the following well-known result by J. Lindenstrauss and L. Tzafriri \cite{LinTza}, which we will use in the sequel.
\begin{theorem}
\label{Lintza}
A Banach space $X$ is isomorphic to a Hilbert space if and only if all its closed subspaces are complemented.
\end{theorem}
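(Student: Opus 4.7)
The easy direction is immediate: if $T:X\to H$ is an isomorphism onto a Hilbert space and $M\subset X$ is a closed subspace, then $T(M)\subset H$ is closed, the orthogonal projection $Q:H\to T(M)$ is bounded, and $P=T^{-1}QT$ is a bounded projection of $X$ onto $M$. For the hard direction my plan is to promote the pointwise hypothesis to a uniform one, then to push a Euclidean structure from small subspaces up to the whole space.

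The first step is to produce a universal constant $K=K(X)$ such that every closed subspace $M\subset X$ admits a projection $P_M:X\to M$ with $\|P_M\|\le K$. The qualitative hypothesis alone gives, for every $M$, \emph{some} bounded projection; the quantitative upgrade comes from a Baire category argument. One considers, on the Polish space of closed subspaces (with a suitable metric) or more concretely on a space of finite sequences, the sets $A_n=\{M:\lambda(M)\le n\}$ where $\lambda(M)$ is the projection constant of $M$; under the hypothesis these sets cover everything, and if none were somewhere dense one could glue witnesses of failure into a single closed subspace with no bounded projection, a contradiction. The dual $X^\ast$ inherits the property by annihilator duality (weak-$\ast$ closed subspaces correspond to quotients of $X$, and quotients of a space whose subspaces are all complemented inherit the property), so one has a uniform constant on both sides.

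The second step is to convert the uniform $K$ into a uniform estimate $d(F,\ell^2_{\dim F})\le C(K)$ for every finite-dimensional $F\subset X$. Here I would feed in Dvoretzky's theorem to locate, inside any sufficiently high-dimensional subspace, an almost-Hilbertian piece $E\subset F$; the $K$-complementation of $E$ in $X$ combined with the $K$-complementation of $F$ and an averaging of projections against the orthogonal group of $E$ promotes the almost-Euclidean norm on $E$ to a global control on $F$. Finally one invokes either a direct extraction argument (Kadec--Pe{\l}czy\'nski style) or Kwapie\'n's theorem characterising Hilbert spaces as those of type $2$ and cotype $2$: uniform control of finite-dimensional Banach--Mazur distances to $\ell^2_n$ delivers both type and cotype $2$, hence the desired isomorphism $X\cong H$.

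The main obstacle is the second step: passing from the fact that individual Euclidean subspaces are $K$-complemented to a uniform Euclidean bound on \emph{all} finite-dimensional subspaces. This is the delicate heart of the Lindenstrauss--Tzafriri argument, and it is why the theorem is cited rather than reproved; one really does need the full force of Dvoretzky together with the averaging/duality machinery rather than any soft manipulation of the hypothesis.
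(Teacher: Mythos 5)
This statement is the Lindenstrauss--Tzafriri solution of the complemented subspaces problem; the paper imports it from \cite{LinTza} as a black box and gives no proof, so the only question is whether your sketch would stand on its own. Your easy direction is correct. For the converse, your three-stage architecture (uniformize the projection constant; show all finite-dimensional subspaces are uniformly Euclidean via Dvoretzky plus averaging; conclude by Kwapie\'n or a local-to-global principle) is indeed the shape of the actual argument, but both nontrivial stages have genuine gaps. The Baire category route to the uniform constant $K$ does not work as described: to apply the Baire category theorem you would need the sets $A_n=\{M:\lambda(M)\le n\}$ to be closed, or at least to have the Baire property, in some Polish topology on the collection of closed subspaces, and the relative projection constant is not semicontinuous in any natural such topology --- small perturbations of a subspace can destroy or create bounded projections, and complementability is not a closed condition. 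What Lindenstrauss and Tzafriri actually do is a direct gliding-hump construction: assuming finite-dimensional subspaces $B_n$ whose relative projection constants tend to infinity, they place suitably separated copies of the $B_n$ in almost disjoint position and show that a bounded projection onto the closed linear span of their union would induce uniformly bounded projections onto each $B_n$, a contradiction. That construction is itself delicate and is entirely absent from your sketch; ``glue witnesses of failure'' is the right slogan but not an argument.

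The second stage is the real heart of the theorem and you explicitly do not carry it out: ``an averaging of projections against the orthogonal group of $E$ promotes the almost-Euclidean norm on $E$ to a global control on $F$'' describes what must be proved rather than proving it. The passage from ``every almost-Euclidean section is $K$-complemented'' to ``$d(F,\ell^2_{\dim F})\le C(K)$ for every finite-dimensional $F$'' needs the quantitative form of Dvoretzky's theorem together with a genuinely intricate averaging and induction scheme, and it is exactly the step that resists soft manipulation, as you yourself note. The final step is fine (uniformly Euclidean local structure gives type $2$ and cotype $2$, hence Kwapie\'n applies, though the original paper used an earlier local-to-global result instead). In sum, your proposal is an accurate road map of the known proof rather than a proof: since the paper also treats the theorem as cited background, nothing is lost for the paper, but as a self-contained argument the proposal has two unfilled holes, and the first of them (the Baire category step) points in a direction that would fail.
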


To proceed with our task we need the following simple lemma. 
\begin{lemma}
\label{complemented}
Let $X$ be a Banach space and $M$ be a closed subspace of $X$. If there exists another Banach space $Y$ and a bounded linear operator
$$S:M\rightarrow Y$$
$1-1$ and onto $Y$, which can be extended to the whole of $X$, then the closed subspace $M$ is complemented in $X$.
\end{lemma}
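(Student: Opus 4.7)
The plan is to construct an explicit bounded linear projection $P:X\to M$, since by the remark preceding the lemma, the existence of such a projection is equivalent to $M$ being complemented.

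First, I would apply the Open Mapping Theorem. Since $S:M\to Y$ is a bounded linear bijection between Banach spaces (note that $M$, being a closed subspace of the Banach space $X$, is itself a Banach space), its inverse $S^{-1}:Y\to M$ is automatically bounded and linear.

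Next, let $\widetilde{S}:X\to Y$ denote the bounded linear extension of $S$ to all of $X$ whose existence is assumed in the hypothesis. I would then define
\[
P:X\to M,\qquad P(x)=S^{-1}\bigl(\widetilde{S}(x)\bigr).
\]
By construction $P$ is bounded and linear as a composition of bounded linear maps, and its range lies inside $M$ because $S^{-1}$ takes values in $M$. The key check is idempotency on $M$: for $x\in M$ we have $\widetilde{S}(x)=S(x)$, hence $P(x)=S^{-1}(S(x))=x$. Thus $P$ is a bounded linear projection of $X$ onto $M$, which by the remark above means $M$ is complemented in $X$ (with complementary subspace $N=\ker P$).

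There is essentially no obstacle here; the only substantive ingredient is the Open Mapping Theorem, which is applicable precisely because $M$ is closed (hence complete) and $Y$ is a Banach space. The conclusion then follows from a one-line composition argument.
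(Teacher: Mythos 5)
Your proof is correct and is exactly the argument in the paper: the paper also takes $P=S^{-1}\circ\hat{S}$ as the desired projection onto $M$. You have merely spelled out the details (the Open Mapping Theorem for the boundedness of $S^{-1}$ and the verification that $P|_M$ is the identity) that the paper leaves as ``easy to see.''
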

\begin{proof}
If 
$$\hat{S}:X\rightarrow Y$$ 
denotes the extension of $S$ to the whole of $X$, then it is easy to see that the operator 
$$S^{-1}\circ\hat{S}:X\rightarrow M$$
is the required bounded projection onto $M$.
\end{proof}  

We can now show that property (S) characterizes Hilbert space structure.
\begin{theorem}
\label{theorem}
A real Banach space $X$ is isomorphic to a Hilbert space if and only if it has property (S).
\end{theorem}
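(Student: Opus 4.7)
The plan is to prove both directions separately; the forward direction transfers the classical Stampacchia theorem across an isomorphism, and the reverse direction, which is the substantive one, uses Lemma \ref{complemented} together with Theorem \ref{Lintza} to reduce the problem to showing that every closed subspace of $X$ is complemented.

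For the forward direction, suppose $T: X\to H$ is an isomorphism onto a Hilbert space. I would set $a(x,y):=(Tx,Ty)_H$, which is bounded bilinear and coercive since $a(x,x)=\|Tx\|^2\geq \|T^{-1}\|^{-2}\|x\|^2$. Given a closed, convex $M\subseteq X$ and $x^\ast\in X^\ast$, the functional $w\mapsto \langle x^\ast,T^{-1}w\rangle$ on $H$ is represented by some $h_0\in H$ via Riesz; the classical Stampacchia theorem applied to the closed convex set $T(M)$ and the coercive form $(\cdot,\cdot)$ produces a unique $y\in T(M)$ satisfying $(y,w-y)\geq (h_0,w-y)$ for all $w\in T(M)$. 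Pulling back via $x:=T^{-1}y\in M$ gives the required inequality on $X$, and uniqueness transfers as well.

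The core of the argument is the reverse direction. Fix the bilinear form $a$ afforded by property (S) and let $M$ be an arbitrary closed subspace of $X$. The key observation is that on a subspace the variational inequality collapses to a linear equation: since $M$ is a subspace and $x\in M$, the set $\{z-x:z\in M\}$ equals $M$, so the inequality
$$a(x,z-x)\geq \langle x^\ast,z-x\rangle\quad (z\in M)$$
is equivalent, by substituting $y$ and then $-y$ for $z-x$, to the equality
$$a(x,y)=\langle x^\ast,y\rangle\quad \text{for all } y\in M.$$
Consequently, property (S) says: for each $x^\ast\in X^\ast$ there is a unique $x\in M$ with $a(x,\cdot)|_M=x^\ast|_M$ in $M^\ast$. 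Define $S: M\to M^\ast$ by $(Sx)(y):=a(x,y)$ for $y\in M$; this is bounded linear with $\|S\|\leq \|a\|$. Since Hahn--Banach allows every element of $M^\ast$ to be realized as the restriction of some $x^\ast\in X^\ast$, property (S) forces $S$ to be a linear bijection from $M$ onto $M^\ast$, hence a topological isomorphism by the open mapping theorem. Moreover, the same formula $\hat{S}(x)(y):=a(x,y)$ for $x\in X$, $y\in M$ defines a bounded linear extension $\hat{S}: X\to M^\ast$ of $S$.

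Applying Lemma \ref{complemented} with $Y=M^\ast$ shows that $M$ is complemented in $X$; as $M$ was arbitrary, Theorem \ref{Lintza} yields that $X$ is isomorphic to a Hilbert space. The main conceptual step—and the only potential obstacle—is the recognition that the variational inequality degenerates to a linear equation on a subspace, which is what permits the concrete identification of $M^\ast$ via $a$ and thereby forces complementation of every closed subspace.
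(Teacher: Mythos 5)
Your proposal is correct and follows essentially the same route as the paper: the variational inequality over a subspace collapses to a linear equation, making $x\mapsto a(x,\cdot)|_M$ a bounded bijection of $M$ onto its dual that extends to all of $X$, so Lemma \ref{complemented} and Theorem \ref{Lintza} apply. The only differences are cosmetic --- you map onto $M^\ast$ rather than the canonically isometric $X^\ast/M^\bot$ used in the paper, and you spell out the forward direction that the paper dismisses as obvious.
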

\begin{proof}
The neccesity is obvious. We prove that property (S) is also sufficient.
To this end let $M$ be any closed subspace of $X$. We will show that $M$ is complemented. Since $M$ is a closed subspace of $X$, it is easy to see that property (S) in particular implies that for all
$x^\ast\in X^\ast$, there exists a unique $x\in M$ such that
\begin{equation}
\label{eq2}
a(x,z)=\langle x^\ast,z\rangle,\text{ for all }z\in M.
\end{equation}
Define the bounded linear operator
$$T:X\rightarrow X^\ast\,,$$ 
by 
$$\langle Tx,z\rangle=a(x,z),\text{ for all }x, z\in X\,$$ 
and let
$$\pi:X^\ast\rightarrow X^\ast/M^\bot$$
be the natural quotient map. 

Then the restriction of the operator $\pi \circ T$ on the subspace $M$ is $1-1$ and onto $X^\ast/M^\bot$. 

To see this first note that if 
$$(\pi \circ T)x=0$$ 
then $Tx\in M^\bot$, i.e. $\langle Tx,z\rangle=0$, for all $z\in M$. By the definition of $T$ this implies that $a(x,z)=0$, for all $z\in M$. But by hypothesis there is a unique $x\in M$ such that $a(x,z)=0$, for all $z\in M$, which by the boundedness of $a$ has to be $0$. Thus the restriction of the operator $\pi \circ T$ on the subspace $M$  is $1-1$.

To show that $\pi \circ T|_M$ is also onto, let $h=x^\ast+M^\bot$, for some $x^\ast\in X^\ast$. Then by (\ref{eq2}) we have that there exists a unique $x\in M$ such that $a(x,z)=\langle x^\ast,z\rangle$, for all $z\in M$. Hence again by the definition of $T$ we have that 
$$\langle Tx,z\rangle=\langle x^\ast,z\rangle\,, \text{ for all }z\in M\,,$$ 
i.e. $Tx-x^\ast\in M^\bot$. Hence $(\pi\circ T)(x)=h$ and thus $\pi \circ T|_M$ is onto $X^\ast/M^\bot$.

Note now that by its definition the operator 
$$S=\pi \circ T|_M$$ 
can be trivially extended to the whole of $X$ and thus by Lemma \ref{complemented} the closed subspace $M$ is a complemented subspace of $X$. Since $M$ was arbitrary we get by Theorem \ref{Lintza} that $X$ is isomorphic to a Hilbert space. 
\end{proof}
\begin{remark}
A careful look in the above proof shows that if the Banach space $X$ has property (S) and $M$ is any closed subspace of $X$ then 
$$X=M\oplus T^{-1}(M^\bot)\,,$$
where $T$ is the operator associated to the bilinear form $a(\cdot,\cdot)$.
\end{remark}

\begin{remark}
\label{remark}
A main hypothesis in Stampacchia's Theorem is the coercivity condition (\ref{coercive}). As it is well-known (see for example \cite{DY}, \cite{Lin}), such a hypothesis cannot hold in an arbitrary Banach space $X$ since if it did, then $X$ would have an equivalent Hilbertian norm induced by the inner product
$$(x,y)=\frac{1}{2}[a(x,y)+a(y,x)]$$
and thus would be isomorphic to  a Hilbert space.
Hence our result implies that there can be no full generalization of Stampacchia's Theorem in an arbitrary Banach space even if one drops the coercivity condition (\ref{coercive}). 
\end{remark}
\begin{remark}
\label{bounded}
Note that if we are restricted to bounded closed and convex subsets of a 
Banach space $X$, then a generalization of Stampacchia's Theorem is possible by just assuming that the bilinear form is strictly positive i.e. 
$$a(x,x)>0\,,\text{ for all }x\in X.$$ 
The proof is a straightforward application of a result due to Brezis \cite[Theorem 24]{Brezis1}, on pseudomonotone operators. It is easy to see that in this case the space $X$ need not be isomorphic to a Hilbert space.
\end{remark}
\begin{remark}
It seems appropriate to mention here a recent result by E. Ernst and M. Th\' era: if as in Remark \ref{bounded} we are restricted to bounded, closed and convex sets and moreover $X$ is a Hilbert space, then the pseudomonotonicity of the operator associated to the bilinear form $a(\cdot,\cdot)$, is a necessary and sufficient condition for the existence of a solution of the variational inequality (\ref{var}) (see \cite[Theorem 3.1]{Ernst}).  
A similar result for unbounded sets has been obtained by A. Maugeri and F. Raciti in \cite{maugeri}.
\end{remark}
\section{Self-dual Banach spaces}
\label{self}
A self-dual Banach space is a Banach space isomorphic to its dual. It is well-known that Hilbert spaces are self-dual although they are far from being the only ones; if $Y$ is any reflexive Banach space then
$$X=Y\oplus Y^\ast$$ 
is self-dual. 

We will now see that our approach in Section \ref{section1} can lead us to a result concerning self-dual Banach spaces.  
The important observation is the fact that the operator $T$ associated to the bilinear form $a(\cdot,\cdot)$, in the proof of Theorem \ref{theorem}, is an isomorphism from $X$ onto $X^\ast$ and hence $X$ is a self-dual space. 

Our result is the following. 
\begin{proposition}
\label{th1}
Let $X$ be a real, self-dual, Banach space. If the isomorphism
$$T:X\rightarrow X^\ast$$
onto $X^\ast$ is such that for any closed subspace $M$ of $X$, the map
$\pi\circ T|_M$  is an isomorphism onto $X^\ast/M^\bot$, where $\pi$ is the natural quotient map from $X^\ast$ onto $X^\ast/M^\bot$, then $X$ is isomorphic to a Hilbert space.
\end{proposition}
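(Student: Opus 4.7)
The plan is to mimic the reasoning in the proof of Theorem \ref{theorem}, which is really just an abstraction of the same argument. The hypothesis already packages, for each closed subspace $M\subseteq X$, exactly the data that Lemma \ref{complemented} needs in order to conclude that $M$ is complemented; once every closed subspace is complemented, Theorem \ref{Lintza} of Lindenstrauss--Tzafriri finishes the job.

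Concretely, I would fix an arbitrary closed subspace $M$ of $X$ and consider the operator
$$S=\pi\circ T|_M:M\rightarrow X^\ast/M^\bot\,,$$
which by hypothesis is a bounded linear bijection from $M$ onto $X^\ast/M^\bot$. The crucial observation is that $S$ admits an obvious bounded linear extension to all of $X$, namely the composition $\pi\circ T:X\rightarrow X^\ast/M^\bot$, since $T$ is defined on the entire space $X$ and $\pi$ is defined on the entire space $X^\ast$. Lemma \ref{complemented} then applies with $Y=X^\ast/M^\bot$ and yields a bounded linear projection from $X$ onto $M$; in fact, as in the remark following Theorem \ref{theorem}, one gets the explicit decomposition $X=M\oplus T^{-1}(M^\bot)$.

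Since $M$ was an arbitrary closed subspace, every closed subspace of $X$ is complemented, so Theorem \ref{Lintza} immediately gives that $X$ is isomorphic to a Hilbert space.

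I do not foresee a real obstacle here: once the hypothesis is read correctly, the proof is essentially a one-line reduction to the machinery already developed in Section \ref{section1}. The only small point worth emphasizing is why $\pi\circ T$ genuinely extends $S$ (it does so tautologically, because restriction to $M$ is the defining feature of $S$), and why this extension being merely bounded (as opposed to being itself an isomorphism) is enough for Lemma \ref{complemented} to apply.
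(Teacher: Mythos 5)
Your proof is correct and is exactly the argument the paper intends: the paper's proof of Proposition \ref{th1} simply says ``We follow the proof of Theorem \ref{theorem}'', and your write-up is precisely that argument with the injectivity/surjectivity of $\pi\circ T|_M$ now taken as a hypothesis rather than derived from property (S). The reduction via Lemma \ref{complemented} (with the tautological extension $\pi\circ T$) and Theorem \ref{Lintza} is the same in both.
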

\begin{proof}
We follow the proof of Theorem \ref{theorem}.
\end{proof}
\begin{remark}
Recalling that the quotient space $X^\ast/M^\bot$ is isomorphic to $M^\ast$ we can rephrase Proposition \ref{th1} as follows: 

``Let $X$ be a self-dual space. If the isomorphism  between $X$ and $X^\ast$ induces in a \textit{natural way} (through the natural quotient maps) isomorphisms between all closed subspaces of $X$ and their corresponding duals, then $X$ is isomorphic to a Hilbert space''. 
\end{remark}

As one can easily see, a necessary and sufficient condition for $\pi\circ T|_M$ to be an isomorphism (not necessarily onto) from $M$  into $X^\ast/M^\bot$, is the existence of a positive constant $c$, such that whenever $x\in X$ and $x^\ast\in X^\ast$ are such that $\langle x^\ast,x\rangle=0$, we have that
\begin{equation}
\label{isom}
||Tx+x^\ast||\geq c||Tx||\,.
\end{equation}
In order to give some geometric intuition to condition (\ref{isom}) we recall the following definition.
\begin{definition}
Let $X$ be a normed space and $x\,,y\in X$. We say that $x$ is orthogonal, in the sense of Birkhoff-James, to $y$ if
$$||x+\lambda y||\geq ||x||, \text{ for all }\lambda\in \mathbb{R}.$$ 
\end{definition}
For more details about this notion of orthogonality the interested reader is referred to \cite{amir} and \cite{Istr}. 

It is easy to see that if whenever $x\in X$ and $x^\ast\in X^\ast$ are such that $\langle x^\ast,x\rangle=0$, we have that
\begin{equation}
\nonumber
\label{bir}
Tx\;\bot\; x^\ast\,, 
\end{equation}
in the sense of Birkhoff-James, then $T$ satisfies condition (\ref{isom}). 

As a matter of fact Birkhoff-James orthogonality is not the only orthogonality relation that can be used to guarantee the validity of condition (\ref{isom}). To see this we recall that in \cite{Partington}, J. R. Partington has introduced the concept of \textit{boundedness} for an orthogonality relation in an arbitrary normed space as follows.

\begin{definition}
An orthogonality relation $\bot$ in a normed linear space is bounded if there exists $c>0$ such that if $x\bot y$ then
\begin{equation}
\nonumber
\label{bound}
||\lambda x+ y||\geq c||x||, \text{ whenever } |\lambda|\geq c.
\end{equation}  
\end{definition}

Several well-known orthogonality relations (for example Birkhoff-James or Diminnie orthogonality, see \cite{diminnie} and \cite{Partington} for more details) are bounded. 
\begin{definition}
\label{homog}
An orthogonality relation $\bot$ in a normed linear space is homogeneous if 
\begin{equation}
\nonumber
x \bot y\;\text{ implies that }\;ax \bot by, \text{ for all }\;a,b\in\mathbb{R}.
\end{equation}
\end{definition}
\begin{remark}
\label{mil}
In \cite{Milicic} it was shown that if an orthogonality relation $\bot$ is homogeneous, then its boundedness is equivalent to the existence of $c>0$, such that $x\bot y$ implies 
\begin{equation}
\nonumber
\label{bound2}
||x+ y||\geq c||x||.
\end{equation}  
\end{remark}
Therefore if whenever $x\in X$ and $x^\ast\in X^\ast$ are such that $\langle x^\ast,x\rangle=0$, we have that
\begin{equation}
\nonumber
\label{general}
Tx\;\bot\; x^\ast, 
\end{equation}
for a homogeneous and bounded orthogonality relation $\bot$, then $T$ satisfies (\ref{isom}). 

To state our next result we need one more definition.
\begin{definition}
\label{nondeg}
An orthogonality relation $\bot$, in a normed linear space, is non-degenerate, if $x\bot x$ implies that $x=0$.
\end{definition}
We can now prove the following Hilbert space characterization.
\begin{theorem}
\label{orthogonal}
A real reflexive Banach space $X$ is isomorphic to a Hilbert space if and only if there exists an isomorphism
$$T:X\rightarrow X^\ast\,,$$
onto $X^\ast$, such that 
\begin{equation}
\label{orth}
Tx\;\bot\; x^\ast, \text{ whenever }  \langle x^\ast,x\rangle=0\,,
\end{equation}
for a non-degenerate, homogeneous and bounded orthogonality relation $\bot$ in $X^\ast$. 
\end{theorem}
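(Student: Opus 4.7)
The necessity direction is immediate: if $X$ is a Hilbert space $H$ I take $T$ to be the Riesz representation isomorphism and $\perp$ to be Birkhoff--James orthogonality on $H^*$. This relation is non-degenerate (since $\|x+\lambda x\|\geq \|x\|$ for all $\lambda\in\mathbb{R}$ forces $x=0$ upon choosing $\lambda=-1$), homogeneous, and bounded with constant $1$; and the hypothesis $\langle x^*,x\rangle=0$ translates via Riesz into $(T^{-1}x^*,x)=0$, which in the Hilbert space $H^*$ gives $Tx\perp x^*$.

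For sufficiency the plan is to reduce to Proposition~\ref{th1} by showing that, for every closed subspace $M$ of $X$, the map $\pi\circ T|_M$ is an isomorphism onto $X^*/M^\perp$. That it is an isomorphism \emph{into} (equivalently, that condition (\ref{isom}) holds) follows at once from the hypotheses: by Remark~\ref{mil}, homogeneity and boundedness of $\perp$ supply a constant $c>0$ with $\|Tx+x^*\|\geq c\|Tx\|$ whenever $\langle x^*,x\rangle=0$, which is precisely (\ref{isom}). In particular the range of $\pi\circ T|_M$ is closed in $X^*/M^\perp$.

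The main obstacle is upgrading this to \emph{onto}, and this is where reflexivity and non-degeneracy enter. I would argue by contradiction. If the (closed) range of $\pi\circ T|_M$ were a proper subspace of $X^*/M^\perp$, the Hahn--Banach theorem would produce a nonzero continuous linear functional on $X^*/M^\perp$ annihilating it. Using the standard identification $(X^*/M^\perp)^*\cong M^{\perp\perp}$ together with the reflexivity of $X$ (which gives $M^{\perp\perp}=M$), this functional corresponds to some nonzero $m\in M$ satisfying $\langle Tx,m\rangle=0$ for every $x\in M$. Specialising to $x=m$ yields $\langle Tm,m\rangle=0$; the orthogonality hypothesis (\ref{orth}), applied with $x=m$ and $x^*=Tm$, then gives $Tm\perp Tm$, and non-degeneracy of $\perp$ forces $Tm=0$. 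Since $T$ is an isomorphism, $m=0$, the desired contradiction. Hence $\pi\circ T|_M$ is onto, Proposition~\ref{th1} applies, and $X$ is isomorphic to a Hilbert space.
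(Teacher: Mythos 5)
Your proof is correct and follows essentially the same route as the paper's: establish condition (\ref{isom}) from homogeneity and boundedness via Remark \ref{mil} so that $\pi\circ T|_M$ is an isomorphism with closed range, then use Hahn--Banach, the identification of $(X^\ast/M^\bot)^\ast$ with $M$ under reflexivity, and the non-degeneracy of $\bot$ applied to $\langle Tm,m\rangle=0$ to rule out a proper range, concluding with Proposition \ref{th1}. The only cosmetic difference is that you spell out the necessity direction (Riesz map plus Birkhoff--James orthogonality), which the paper dismisses as obvious.
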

\begin{proof}
The necessity is obvious. To prove the sufficiency of our claim we will use Proposition \ref{th1}. To this end let $M$ be any closed subspace of $X$. By (\ref{orth}) and the discussion above,  the operator $T$ satisfies condition (\ref{isom}) and hence $\pi\circ T|_M$ is an isomorphism. It remains to show that $\pi\circ T|_M$ is onto $X^\ast/M^\bot$. 

Since $(\pi\circ T)(M)$ is closed it is enough to show that it is a dense subspace of $X^\ast/M^\bot$. Assume the contrary i.e. 
$$(\pi\circ T)(M)\neq X^\ast/M^\bot\,.$$
Then by the Hahn-Banach Theorem there exists $0\neq f\in (X^\ast/M^\bot)^\ast$ such that 
$$f(Tx)=0\,, \text{ for all } x\in M\,.$$ 
Since $X$ is reflexive so is $M$ and hence it is isometrically isomorphic to $(X^\ast/M^\bot)^\ast$. Therefore there exists $x\in M$, such that $$\langle Tx,x\rangle=f(Tx)=0$$ 
and thus again by (\ref{orth}) we get that $Tx\bot Tx$. Using the non-degeneracy of $\bot$ and the injectivity of $T$ we get that $x$ and consequently $f$ have to be 0, which is a contradiction. 

Hence $\pi\circ T|_M$ is an isomorphism onto $X^\ast/M^\bot$ and by Proposition \ref{th1} the self-dual Banach space $X$ is isomorphic to a Hilbert space. 
\end{proof}
\section{The cosine of a linear operator revisited}
A simple situation where condition (\ref{isom}) holds is when there exists $c>0$, such that the operator $T$ satisfies 
\begin{equation}
\label{hes}
\langle Tx,x\rangle\geq c||Tx||^2,\text{ for all }x\in X\,.
\end{equation}
Recall the following well-known definition. 
\begin{definition}
Let $X$ be a real Banach space. We say that the linear operator $$T:D(T)\subseteq X\rightarrow X^\ast$$ 
\begin{itemize}
\item[(i)] is positive, if $\langle Tx,x\rangle\geq 0$, for all $x\in D(T)$.
\item[(ii)] is strictly positive, if $\langle Tx,x\rangle>0$, for all $x\in D(T)$, with $x\neq 0$.
\item[(iii)] is coercive, if there exists $c>0$, such that 
$$\langle Tx,x\rangle\geq c||x||^2\,,$$ 
for all $x\in D(T)$. 
\item[(iv)] is symmetric, if $\langle Tx,y\rangle=\langle Ty,x\rangle$, for all $x\,,y\in D(T)$.
\end{itemize}
\end{definition}
Note that since in all our previous considerations (in Section \ref{self}), the operator $T$ was an isomorphism inequality (\ref{hes}) would imply that the operator $T$ was actually coercive.
In the general case though, operators satisfying (\ref{hes}) form a much larger class than that of coercive operators. 
For example, see \cite{DY} and \cite{hess2} for more details, any positive, everywhere defined and symmetric operator $T$ satisfies (\ref{hes}).

On the other hand, unlike coercivity (see Remark \ref{remark}) condition (\ref{hes}) cannot guarantee on its own - i.e. when $T$ is no longer an isomorphism but just a continuous linear operator - the Hilbertian structure of $X$. Note that this is still the case even if $T$ has additional nice properties such as symmetry and positivity. It seems therefore quite natural that there may be some room between these two classes. To make things more precise we need the following definition.
\begin{definition}
\label{cosine}
Let $X$ be a real Banach space and let  $$T:D(T)\subseteq X\rightarrow X^\ast$$ be a positive linear operator. The cosine of $T$ is defined as follows:
\begin{equation}
\label{cos}
\cos T=\inf\left\{\frac{\langle Tx,x\rangle}{||Tx||\,||x||}\;,\;\text{ for all } 0\neq x\in D(T) \text{, such that } Tx\neq 0\right\}\,.
\end{equation}
\end{definition}

Using expression (\ref{cos}) one can define the angle $\phi(T)$ of the linear operator $T$, which has an obvious geometric interpretation: it measures the maximum turning effect of $T$. 

The above concepts were introduced, in the context of a complex Hilbert space, by K. Gustafson in \cite{gus1} and have attracted a lot of interest since then. We refer the interested reader to the book of K. Gustafson and D. Rao \cite{gus3}, for more details.

In order for the cosine of an operator to be a reliable tool, distinguishing between operators with different properties, it has to be positive for a large class of linear operators. As one can easily see this is the case for coercive everywhere defined - thus continuous - linear operators. 
On the other hand things fail dramatically for unbounded linear operators: it was shown by K. Gustafson and B. Zwahlen in \cite{gus2} and by P. Hess (in a somewhat more general context) in \cite{hess}, that the cosine of an unbounded linear operator is always 0.

To return to our main theme note that if $\cos T>0$, then $T$ satisfies (\ref{hes}). Thus non-coercive operators with positive cosine form the aforementioned  intermediate class, between (\ref{hes}) and coercivity. It turns out, as we shall see below, that if $X$ is not isomorphic to a Hilbert space then this class is quite small.

We need one more definition.
\begin{definition}[\cite{Partington}]
\label{properties}
An orthogonality relation $\bot$, in a normed linear space $X$ is 
\begin{itemize}
\item[(i)] symmetric, if $x\bot y$ implies $y\bot x$.
\item[(ii)] right additive, if $x\bot y$ and $x\bot z$ implies $x\bot (y+z)$.
\item[(iii)] resolvable, if for any $x\,,y$ there exists $a\in\mathbb{R}$, such that $x\bot (ax+y)$.
\item[(iv)] continuous, if $x_n\rightarrow x$, $y_n\rightarrow y$ and $x_n\bot y_n$, then $x\bot y$.
\end{itemize}
\end{definition}
It should be noted that an orthogonality relation having all six properties of Definitions \ref{homog}, \ref{nondeg} and \ref{properties} (i.e. except boundedness) exists in any separable Banach space (see Theorem 3 in \cite{Partington}).

If boundedness is added things change drastically as the following result of J. R. Partington \cite{Partington} illustrates.
\begin{theorem}[\cite{Partington}, Theorem 4]
\label{part}
If $X$ is a Banach space and $\bot$ is an orthogonality relation in $X$, that is non-degenerate, symmetric, homogeneous, right additive, resolvable, continuous and bounded, then $X$ is isomorphic to a Hilbert space.
\end{theorem}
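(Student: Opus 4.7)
My plan is to reduce Partington's theorem to Theorem \ref{Lintza} by producing, for every closed subspace of $X$, a bounded linear projection whose norm is controlled by a constant that does not depend on the subspace. The starting point is to use the orthogonality relation to manufacture ``orthogonal complements'' that turn out to be closed hyperplanes. For each nonzero $x\in X$ set $x^{\bot}:=\{y\in X:x\bot y\}$. Homogeneity and right additivity make $x^{\bot}$ a linear subspace, and continuity of $\bot$ makes it closed. Resolvability provides, for every $y\in X$, a scalar $a$ with $x\bot(ax+y)$, so $y=-ax+(ax+y)\in\mathbb{R}x+x^{\bot}$; non-degeneracy forbids $x\in x^{\bot}$, so $X=\mathbb{R}x\oplus x^{\bot}$ and there is a canonical rank-one projection $P_{x}$ of $X$ onto $\mathbb{R}x$ along $x^{\bot}$.

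The next step is a uniform bound. Boundedness of $\bot$ together with homogeneity (Remark \ref{mil}) supplies a constant $c>0$ such that $\|\lambda x+y\|\ge c\,\|\lambda x\|$ whenever $x\bot y$ and $\lambda\in\mathbb{R}$; writing $z\in X$ as $\lambda x+y$ with $y\in x^{\bot}$ gives $\|z\|\ge c\,\|P_{x}z\|$, hence $\|P_{x}\|\le 1/c$ for every $x\neq 0$. To pass from rank-one projections to projections onto arbitrary closed subspaces $M$, I would use resolvability to build inductively a sequence $(x_{n})\subset M$ that is pairwise $\bot$-orthogonal and whose span is dense in $M$; symmetry together with right additivity ensures that $x_{k+1}\bot x_{j}$ for all $j\le k$ remains compatible with the previous choices, and that $P_{x_{k+1}}$ is a rank-one projection on the codimension-$k$ subspace $\bigcap_{j\le k}x_{j}^{\bot}$ on which all earlier $P_{x_j}$ vanish. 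Consequently the partial sums $Q_{n}=P_{x_{1}}+\dots+P_{x_{n}}$ are projections onto $\mathrm{span}(x_{1},\dots,x_{n})$, and the bound $1/c$ can be reapplied at each stage on the complementary subspace rather than compounded. Passage to the strong limit, using completeness of $X$ and the uniform bound on $(Q_{n})$, yields a bounded projection onto $M$; since $M$ was arbitrary, Theorem \ref{Lintza} gives that $X$ is isomorphic to a Hilbert space.

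The main obstacle is exactly the uniform norm control on the projections $Q_{n}$: a naive estimate $\|Q_{n}\|\le\sum_{k}\|P_{x_{k}}\|\le n/c$ is useless, and what is really needed is that each new summand $P_{x_{k+1}}$ acts nontrivially only on a subspace where all previous summands are zero, so that the resulting decomposition is ``$\bot$-orthogonal at every stage''. Verifying this rigorously is where all six compatibility properties of $\bot$ are used together; symmetry and right additivity are what make the successive hyperplanes $\bigcap_{j\le k}x_{j}^{\bot}$ themselves stable under the orthogonality relation (so that the induction can even start), while continuity and non-degeneracy are needed to pass to the limit subspace $M$ without losing the projection. Once this stability is established, the reduction to Lindenstrauss--Tzafriri is immediate.
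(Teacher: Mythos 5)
The paper does not actually prove this statement --- it is imported verbatim from Partington's paper --- so there is no internal proof to measure you against; I can only assess your argument on its own terms. Your overall strategy (orthogonal complements $x^{\bot}$, rank-one projections of norm at most $1/c$, Gram--Schmidt via resolvability, reduction to Lindenstrauss--Tzafriri) is the natural one, and the algebraic part is correct. But the crucial norm estimate is justified by the wrong mechanism. The bound $\|Q_n\|\le 1/c$ is true, yet not because ``the bound can be reapplied at each stage'': disjointly supported rank-one projections, each of norm $\le 1/c$ and each vanishing where the previous ones act, can still have partial sums of unbounded norm (this is exactly what happens for a minimal system that is not a basic sequence). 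The correct argument is a \emph{single} application of boundedness: symmetry plus right additivity yield left additivity, so for $z=\sum_{k\le n}\lambda_k x_k+y$ with $y\in\bigcap_{k\le n}x_k^{\bot}$ one has $\bigl(\sum_{k\le n}\lambda_k x_k\bigr)\bot\, y$, i.e. $Q_nz\,\bot\,(I-Q_n)z$, and Remark \ref{mil} gives $\|z\|\ge c\|Q_nz\|$ outright. You gesture at this (``$\bot$-orthogonal at every stage'') but never derive left additivity or state the one inequality that does the work.

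The genuine gap is the passage to the limit. Uniform boundedness of $(Q_n)$ plus completeness of $X$ gives strong convergence of $Q_n$ only on the closure of $M+\bigcap_k x_k^{\bot}$, and you have no argument that this set is dense in $X$; for $z$ outside it, $Q_nz$ is merely a bounded sequence in $M$ with no reason to converge, and $X$ is not assumed reflexive, so weak limits are unavailable. What you have actually shown is that $(x_k)$ is a basic sequence with basis constant $\le 1/c$ whose closed span is $M$, which does not by itself complement $M$ in $X$. (There is also the secondary issue that your dense orthogonal sequence exists only for separable $M$, whereas Theorem \ref{Lintza} needs \emph{all} closed subspaces.) The standard repair is to stop at the finite-dimensional level: your argument shows that every finite-dimensional subspace $E$ of $X$ --- orthogonalize any basis of $E$ --- is the range of a projection of norm $\le 1/c$, and one then invokes the quantitative form of the Lindenstrauss--Tzafriri theorem (uniform complementation of all finite-dimensional subspaces already forces $X$ to be isomorphic to a Hilbert space) rather than the qualitative statement of Theorem \ref{Lintza}. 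With that substitution your proof closes; as written, the final step does not.
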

In the sequel we identify $T^\ast$ with the restriction on $X$ of the adjoint of the linear operator $T:X\rightarrow X^\ast$ (which is defined on the whole of $X^{\ast\ast}$). 

Using Theorem \ref{part}, we can prove our main result for this section.
\begin{theorem}
\label{zero}
Let $X$ be a real Banach space, not isomorphic to a Hilbert space and
$$T:X\rightarrow X^\ast$$ 
a positive linear operator. If there exists $c>0$, such that 
\begin{equation}
\label{ineq}
||T^\ast x||\leq c||Tx||\,,\text{ for all }x\in X\,,
\end{equation}
then $\cos T=0$.
\end{theorem}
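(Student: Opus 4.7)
The plan is to prove the contrapositive: assuming $\cos T > 0$, I would exhibit on $X$ an orthogonality relation satisfying all seven properties listed in Definitions \ref{homog}, \ref{nondeg}, and \ref{properties}, so that Partington's Theorem \ref{part} forces $X$ to be isomorphic to a Hilbert space, contrary to hypothesis.

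The first step, and the critical one, is to observe that positivity of $T$ together with $\cos T > 0$ forces $\ker T = \{0\}$ (when $T \not\equiv 0$, else there is nothing to prove). Indeed, if $0 \neq z \in \ker T$, positivity applied to $y + tz$ yields
\begin{equation*}
\langle T(y + tz), y + tz\rangle = \langle Ty, y\rangle + t\,\langle Ty, z\rangle \geq 0\text{ for all }t \in \mathbb{R},
\end{equation*}
forcing $\langle Ty, z\rangle = 0$ for every $y \in X$. Picking $y$ with $Ty \neq 0$ and setting $x_t = y + tz$, the quantities $Tx_t = Ty$ and $\langle Tx_t, x_t\rangle = \langle Ty, y\rangle$ are independent of $t$ while $\|x_t\| \to \infty$, driving the cosine ratio to $0$ and contradicting $\cos T > 0$. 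The hypothesis $\|T^\ast x\| \leq c\|Tx\|$ then also ensures that the restriction of $T^\ast$ to $X$ has trivial kernel.

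Next I would form the symmetrization $S := T + T^\ast|_X : X \to X^\ast$, noting that $\langle Sx, y\rangle = \langle Sy, x\rangle$, that $S$ is positive, and that $\cos S \geq 2\cos T/(1+c) > 0$ (from $\langle Sx, x\rangle = 2\langle Tx, x\rangle \geq 2(\cos T)\|Tx\|\|x\|$ and $\|Sx\| \leq (1+c)\|Tx\|$); moreover, $Sx = 0$ forces $\langle Tx, x\rangle = 0$, hence $Tx = 0$, hence $x = 0$ by the previous step, so $S$ is injective. I then declare $x \perp y$ iff $\langle Sx, y\rangle = 0$. Bilinearity and symmetry of $\langle S\,\cdot\,,\,\cdot\,\rangle$ immediately give symmetry, homogeneity, right additivity, and continuity of $\perp$; non-degeneracy reduces to $x \perp x \Rightarrow \langle Sx, x\rangle = 0 \Rightarrow Sx = 0 \Rightarrow x = 0$; resolvability for $x \neq 0$ follows by taking $a = -\langle Sx, y\rangle / \langle Sx, x\rangle$, since $\langle Sx, x\rangle \geq (\cos S)\|Sx\|\|x\| > 0$. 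Boundedness, via Remark \ref{mil}, amounts to
\begin{equation*}
\|Sx\|\,\|x + y\| \geq \langle Sx, x + y\rangle = \langle Sx, x\rangle \geq (\cos S)\|Sx\|\|x\|,
\end{equation*}
which gives $\|x + y\| \geq (\cos S)\|x\|$ whenever $x \perp y$. Theorem \ref{part} then produces the contradiction.

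The main obstacle is the injectivity step of the second paragraph: without ruling out a non-trivial kernel of $T$, the orthogonality built from $S = T + T^\ast$ would be degenerate on that kernel, and Partington's theorem could not be applied.
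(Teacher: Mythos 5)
Your proof is correct and follows essentially the same route as the paper: pass to the contrapositive, symmetrize $T$ to $S=T+T^\ast$, define the orthogonality $x\bot y$ by $\langle Sx,y\rangle=0$, verify Partington's seven conditions (boundedness via the same estimate $\|Sx\|\,\|x+y\|\geq\langle Sx,x+y\rangle=\langle Sx,x\rangle$), and invoke Theorem \ref{part} to reach a contradiction. Your preliminary observation that positivity together with $\cos T>0$ forces $\ker T=\{0\}$ is a genuine improvement in rigor: the paper simply asserts that $S$ is strictly positive and that $\bot$ is non-degenerate and resolvable, and your kernel argument is precisely what is needed to justify those claims.
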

\begin{proof}
Assume the contrary and let $\cos T=\delta>0$. The linear operator
$$S:X\rightarrow X^\ast$$
defined by
$$S=\frac{1}{2}(T+T^\ast)\,.$$
is strictly positive, everywhere defined and hence continuous. We define the following orthogonality relation in $X$:
$$x\bot y\,,\text{ if } \langle Sx,y\rangle =0\,.$$
It is easy to see that $\bot$ is non-degenerate, symmetric, homogeneous, right additive, resolvable and continuous. To see that $\bot$  is also bounded take $x\bot y$ with $x\neq 0$. Then
\begin{eqnarray*}
||x+y||&=&\sup_{x^\ast\neq 0}\frac{\langle x^\ast,x+y\rangle}{||x^\ast||}\\
&\geq &\frac{\langle Sx,x+y\rangle}{||Sx||}
=\frac{\langle Tx,x\rangle}{||Sx||}\\
&\geq &\frac{2\langle Tx,x\rangle}{||Tx||(1+c)}\\
&\geq &\frac{2\delta}{1+c}||x||\,,
\end{eqnarray*}
where the second inequality is justified by (\ref{ineq}). 

Since $\bot$ is homogeneous, by Remark \ref{mil}, the orthogonality relation
$\bot$ is also bounded. 

Hence by Theorem \ref{part} the Banach space $X$ is isomorphic to a Hilbert space, which is a contradiction.
Thus $\cos T=0$.
\end{proof}
\begin{remark}
The class of operators satisfying (\ref{ineq}) is quite large as it includes positive, everywhere defined, symmetric linear operators.
\end{remark}
Combining this last remark with Theorem \ref{zero} we can have the following simple Hilbert space characterization.
\begin{corollary}
\label{symmetric}
A real Banach space $X$  is isomorphic to a Hilbert space if and only if there exists
a positive and symmetric linear operator
$$T:X\rightarrow X^\ast$$ 
with $\cos T>0$. 
\end{corollary}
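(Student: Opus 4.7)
The plan is to deduce this as an immediate consequence of Theorem \ref{zero}, used in its contrapositive form. The necessity direction is straightforward: when $X$ is a Hilbert space I would take $T$ to be the operator induced by the inner product (the Riesz isomorphism from $X$ onto $X^\ast$), which is positive, symmetric and satisfies $\cos T = 1$.

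For the sufficiency, suppose $T: X \to X^\ast$ is positive, symmetric and has $\cos T > 0$. My first step would be to observe that symmetry forces $T^\ast = T$ under the identification of $T^\ast$ with its restriction to $X$ introduced just before Theorem \ref{zero}: for $x, y \in X$ one has $\langle T^\ast y, x\rangle = \langle Tx, y\rangle = \langle Ty, x\rangle$, so $T^\ast y = Ty$. In particular $\|T^\ast x\| = \|Tx\|$, which is exactly inequality (\ref{ineq}) with constant $c = 1$. Applying the contrapositive of Theorem \ref{zero} then immediately yields the result: if $X$ were not isomorphic to a Hilbert space, Theorem \ref{zero} would force $\cos T = 0$, contradicting our hypothesis.

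The main ``obstacle'' here is really just the bookkeeping check that $T^\ast = T$ on $X$, which is precisely the observation recorded in the remark preceding the corollary. All the substantive work has already been done upstream in Theorem \ref{zero}, and one layer deeper in Partington's Theorem \ref{part}; the corollary is essentially a restatement of Theorem \ref{zero} specialized to the symmetric case, where (\ref{ineq}) comes for free.
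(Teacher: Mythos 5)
Your proof is correct and follows the same route as the paper: the paper deduces the corollary by noting (in the remark immediately preceding it) that positive, everywhere defined, symmetric operators satisfy (\ref{ineq}), and then applying Theorem \ref{zero} in contrapositive form, with the Riesz isomorphism handling necessity. Your explicit verification that $T^\ast=T$ on $X$, so that (\ref{ineq}) holds with $c=1$, is exactly the bookkeeping the paper leaves implicit.
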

It seems quite interesting that if $X$ is not isomorphic to a Hilbert space then an operator and its adjoint cannot have both positive cosines.
\begin{proposition}
Let $X$ be a real Banach space, not isomorphic to a Hilbert space and
$$A:X\rightarrow X^\ast$$ 
a positive linear operator with $\cos A>0$. Then $\cos A^\ast=0$.
\end{proposition}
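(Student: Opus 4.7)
The plan is to argue by contradiction, assuming both $\cos A>0$ and $\cos A^\ast>0$, and to use this double positivity to verify the hypothesis of Theorem \ref{zero} for the operator $A$ itself, thereby forcing $\cos A=0$.

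First I would record the elementary fact that $A^\ast$ is again a positive operator from $X$ into $X^\ast$. Indeed, unwinding the definition of the adjoint and restricting to $X\subseteq X^{\ast\ast}$, one has $\langle A^\ast x,y\rangle=\langle Ay,x\rangle$ for all $x,y\in X$, so in particular $\langle A^\ast x,x\rangle=\langle Ax,x\rangle\geq 0$. Hence $\cos A^\ast$ is a well-defined quantity in the sense of Definition \ref{cosine}.

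Next, assuming $\cos A^\ast=\delta>0$, I would combine the lower bound coming from $\cos A^\ast$ with the trivial upper bound $\langle Ax,x\rangle\leq\|Ax\|\,\|x\|$. For any $x\in X$ with $A^\ast x\neq 0$ and $x\neq 0$, Definition \ref{cosine} gives
\begin{equation*}
\delta\,\|A^\ast x\|\,\|x\|\;\leq\;\langle A^\ast x,x\rangle\;=\;\langle Ax,x\rangle\;\leq\;\|Ax\|\,\|x\|,
\end{equation*}
so $\|A^\ast x\|\leq\delta^{-1}\|Ax\|$; the case $A^\ast x=0$ is trivial, and the case $x=0$ as well. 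Therefore $A$ satisfies inequality (\ref{ineq}) with constant $c=\delta^{-1}$.

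Finally, since $X$ is assumed not isomorphic to a Hilbert space, Theorem \ref{zero} applied to $T=A$ yields $\cos A=0$, contradicting the standing hypothesis $\cos A>0$. This contradiction forces $\cos A^\ast=0$. I do not anticipate any serious obstacle; the only step that requires a moment of care is the identification $\langle A^\ast x,x\rangle=\langle Ax,x\rangle$ under the convention that $A^\ast$ denotes the restriction of the full adjoint to $X$, exactly as specified in the paragraph preceding Theorem \ref{zero}.
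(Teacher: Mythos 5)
Your proof is correct and is essentially the mirror image of the paper's: both arguments verify inequality (\ref{ineq}) through the identity $\langle A^\ast x,x\rangle=\langle Ax,x\rangle$ combined with the cosine lower bound, and then invoke Theorem \ref{zero}. The paper applies Theorem \ref{zero} directly to $T=A^\ast$, using $\cos A=\delta>0$ to obtain $\|A^\ast x\|\geq\delta\|Ax\|$, whereas you apply it to $T=A$ under the contradiction hypothesis $\cos A^\ast>0$; the two routes are interchangeable and equally valid.
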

\begin{proof}
Assume $\cos A=\delta>0$ and let $x\neq 0$. Then
\begin{eqnarray*}
||A^\ast x||&=&\sup_{y\neq 0}\frac{\langle A^\ast x,y\rangle}{||y||}\\
&\geq &\frac{\langle Ax,x\rangle}{||x||}\\
&\geq 
&\delta||Ax||\,.
\end{eqnarray*}
If $T=A^\ast$, then $T$ is a positive linear operator that satisfies (\ref{ineq}). Thus by Theorem \ref{zero} we get that $\cos A^\ast=0$.
\end{proof}
\subsection{An application to quadratic forms}
Recall that a continuous quadratic form on a normed space $X$ is a function
$$q:X\rightarrow \mathbb{R}$$
for which there exists a bounded bilinear form 
$$a:X\times X\rightarrow\mathbb{R}$$
such that 
$$q(x)=\frac{1}{2}a(x,x)\,.$$
It is well known (see for example \cite{Kalton}) that there exists a one-to-one correspondence between continuous quadratic forms and symmetric linear operators
$$T:X\rightarrow X^\ast$$
through the formula
\begin{equation}
\label{quadratic}
q(x)=\frac{1}{2}\langle Tx,x\rangle\,.
\end{equation}
Moreover, each continuous quadratic form is everywhere Frechet differentiable and its derivative is equal to $2T$, where $T$ is the symmetric operator in (\ref{quadratic}). 

Using Corollary \ref{symmetric}, we can have the following result.
\begin{proposition}
Let $X$ be a real Banach space, not isomorphic to a Hilbert space and
$$q:X\rightarrow \mathbb{R}$$
a continuous quadratic form. Then for any $\varepsilon>0$, there exists $x\in X$, such that
$$q(x)<\varepsilon ||q'(x))||\,||x||\,.$$
\end{proposition}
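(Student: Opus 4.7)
My plan is to translate the inequality into a statement about the symmetric bounded operator $T:X\rightarrow X^\ast$ associated to $q$ through the formula (\ref{quadratic}), whose Fr\'echet derivative is $q'(x)=2Tx$. In these terms the target inequality $q(x)<\varepsilon ||q'(x)||\,||x||$ rewrites as
$$\langle Tx,x\rangle<4\varepsilon ||Tx||\,||x||,$$
which is exactly a statement about how small the ratio appearing in Definition \ref{cosine} can be made.

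I would then split into two cases. If $T$ fails to be positive, I pick any $x_0\neq 0$ with $\langle Tx_0,x_0\rangle<0$; then $q(x_0)<0\leq\varepsilon ||q'(x_0)||\,||x_0||$, so this single $x_0$ witnesses the claim uniformly in $\varepsilon>0$, and no further work is needed in this case.

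In the remaining case $T$ is positive. If in addition $T=0$ then $q\equiv 0$ and the strict inequality is unattainable, so I tacitly assume $T\neq 0$. Being positive, symmetric and everywhere defined, $T$ lies precisely in the scope of Corollary \ref{symmetric}; combined with the hypothesis that $X$ is not isomorphic to a Hilbert space, that corollary rules out $\cos T>0$, and positivity of $T$ gives $\cos T\geq 0$, so $\cos T=0$. Unwinding the infimum in Definition \ref{cosine}, for the prescribed $\varepsilon>0$ I can then find $x\neq 0$ with $Tx\neq 0$ satisfying $\langle Tx,x\rangle/(||Tx||\,||x||)<4\varepsilon$, and this $x$ is the required witness. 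The only real work is the reformulation; once the inequality is rewritten as a cosine bound, Corollary \ref{symmetric} does all the heavy lifting and there is no genuine obstacle beyond handling the sign of $T$ via the case split.
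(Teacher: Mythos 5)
Your proof is correct and follows essentially the same route as the paper: split on whether $q$ (equivalently $T$) takes a negative value, and otherwise invoke Corollary \ref{symmetric} to get $\cos T=0$ and unwind the infimum. Your extra observation that the statement fails for $q\equiv 0$ (where the strict inequality $0<0$ is unattainable) is a legitimate degenerate case the paper silently ignores, and excluding it is the right fix.
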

\begin{proof}
If $q(x)<0$, for some $x\in X$ we are done. If this is not the case then the symmetric linear operator $T$ that generates $q$, is positive and thus by Corollary \ref{symmetric} 
$$\cos T=0\,.$$ 
Hence for any $\varepsilon>0$, there exists $x\neq 0$, such that
$$\frac{\langle Tx,x\rangle}{||Tx||\,||x||}<\varepsilon\,.$$
Since $q(x)=\displaystyle\frac{1}{2}\langle Tx,x\rangle$ and $q'=2T$ the result follows.
\end{proof}
\subsection{An application to evolution triples}
We end this paper with an application of Theorem \ref{zero} to evolution triples.

Recall that we say that a Banach space is continuously and densely embedded into another Banach space $Y$, if there exists an injective, bounded linear operator 
$$i:X\rightarrow Y$$
such that $i(X)$ is dense in $Y$. We have the following Proposition.
\begin{proposition}
\label{evolution}
Let $X$ be a real reflexive Banach space that is continuously and densely embedded into a Hilbert space $H$ and assume that $X$ is not isomorphic to a Hilbert space. Then for any $\varepsilon>0$, there exists $x\in X$, such that
$$||i(x)||_H^2<\varepsilon ||i^\ast(i(x))||_{X^\ast}||x||_X\,,$$
where $i$ is the embedding operator from $X$ into $H$.
\end{proposition}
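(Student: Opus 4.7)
The plan is to apply Corollary \ref{symmetric} (and through it Theorem \ref{zero}) to the canonical operator associated with the evolution triple. Identifying $H$ with $H^\ast$ via the Riesz representation theorem, consider
$$T=i^\ast\circ i:X\longrightarrow X^\ast.$$
Since $i$ is bounded and $X$ is continuously embedded in $H$, the operator $T$ is bounded and everywhere defined. A direct computation gives $\langle Tx,y\rangle=(i(x),i(y))_H$ for every $x,y\in X$, which shows at once that $T$ is \emph{symmetric} and \emph{positive}, with
$$\langle Tx,x\rangle=\|i(x)\|_H^2.$$

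The next step is to apply Corollary \ref{symmetric}: since $X$ is not isomorphic to a Hilbert space and $T$ is a positive symmetric operator from $X$ to $X^\ast$, we must have $\cos T=0$. (Equivalently, noting that reflexivity of $X$ together with symmetry of $T$ gives $T^\ast=T$, hypothesis (\ref{ineq}) of Theorem \ref{zero} is trivially satisfied with $c=1$, and that theorem applies directly.) Before invoking the definition (\ref{cos}) of the cosine I would briefly check that $T$ is injective: if $Tx=0$ then $\|i(x)\|_H^2=\langle Tx,x\rangle=0$, so $i(x)=0$, and injectivity of $i$ forces $x=0$; hence for every $x\ne 0$ one also has $Tx\ne 0$, so the infimum in (\ref{cos}) is taken over a non-trivial set.

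Finally, translating $\cos T=0$ back to the evolution-triple language: given $\varepsilon>0$, pick $x\ne 0$ with
$$\frac{\langle Tx,x\rangle}{\|Tx\|_{X^\ast}\,\|x\|_X}<\varepsilon,$$
and rewrite $\langle Tx,x\rangle=\|i(x)\|_H^2$ together with $Tx=i^\ast(i(x))$ to obtain exactly
$$\|i(x)\|_H^2<\varepsilon\,\|i^\ast(i(x))\|_{X^\ast}\,\|x\|_X.$$

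I do not foresee a real obstacle; the proof is essentially a packaging of Corollary \ref{symmetric} in evolution-triple language. The only point worth stating carefully is the identification (under reflexivity of $X$ and the Riesz identification $H\cong H^\ast$) which makes $T$ symmetric in the sense of the preceding section, so that $T^\ast=T$ and Theorem \ref{zero} applies with constant $c=1$.
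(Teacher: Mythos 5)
Your proof is correct and follows essentially the same route as the paper: form $T=i^\ast\circ i$, observe that $\langle Tx,y\rangle=(i(x),i(y))_H$ makes $T$ positive and symmetric, invoke Corollary \ref{symmetric} to get $\cos T=0$, and translate back. The extra checks you include (injectivity of $T$ so the infimum in (\ref{cos}) is over a nonempty set, and $T^\ast=T$ so that (\ref{ineq}) holds with $c=1$) are sound and, if anything, slightly more careful than the paper's own write-up.
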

\begin{proof}
Since 
$$i:X\rightarrow H$$ 
is the embedding operator from $X$ into $H$ and the embedding is continuous and dense, then (after identifying $H$ with its dual space $H^\ast$) the embedding 
$$i^\ast:H\rightarrow X^\ast$$ 
is also continuous and dense (we say that $X$, $H$ and $X^\ast$ form an evolution or a Gelfand triple). 

Let 
$$T:X\rightarrow X^\ast$$
be defined by $T=i^\ast\circ i$. Then $T$ is a strictly positive, symmetric operator and by Corollary \ref{symmetric} we have that 
$$\cos T=0\,.$$
Thus for any $\varepsilon>0$, there exists $x\in X$, such that
$$\frac{\langle Tx,x\rangle}{||Tx||_{X^\ast}||x||_X}<\varepsilon.$$
But $\langle Tx,x\rangle=(i(x),i(x))_H=||i(x)||_H^2$ and hence the result follows.
\end{proof}
A concrete example of the above situation is the following:
\begin{example}
Let $\Omega\subseteq \mathbb{R}^N$, open and bounded and assume $p>2$. Then for every $\varepsilon>0$, there exists $f\in L^p(\Omega)$, such that
$$||f||_{L^2(\Omega)}^2<\varepsilon ||f||_{L^q(\Omega)}||f||_{L^p(\Omega)}\,,$$
where $\displaystyle\frac{1}{p}+\frac{1}{q}=1$.
\end{example}
\begin{proof}
Let $i:L^p(\Omega)\rightarrow L^2(\Omega)$ be the identity operator and use the previous proposition.
\end{proof}

\begin{acknowledgment}
The author would like to thank Dr. D. Drivaliaris and Mr. M. Garagai for many fruitful discussions. 
\end{acknowledgment}


\end{document}